\newtheorem{theorem}[equation]{Theorem}
\newtheorem{lemma}[equation]{Lemma}
\newtheorem{proposition}[equation]{Proposition}
\newenvironment{proof}[1][Proof]{\noindent\textbf{#1.} }{\ \rule{0.5em}{0.5em}}
\def\sideremark#1{\ifvmode\leavevmode\fi\vadjust{\vbox to0pt{\vss
 \hbox to 0pt{\hskip\hsize\hskip1em
\vbox{\hsize2cm\tiny\raggedright\pretolerance10000 
 \noindent #1\hfill}\hss}\vbox to8pt{\vfil}\vss}}} 
\title{$\mathrm{Sp}(2)/\mathrm{U}(1)$ and a Positive
Curvature Problem}
\author{Ming Xu\footnote{Address: College of Mathematics,
        Tianjin Normal University,
        Tianjin 300387, P.R.China; e--mail: {\tt mgmgmgxu@163.com}.
        Research supported by NSFC no. 11271216, State Scholarship
        Fund of CSC (no. 201408120020), Science and Technology Development
        Fund for Universities and Colleges in Tianjin
        (no. 20141005), Doctor fund of Tianjin Normal
        University (no. 52XB1305).}\,\, \&
        Joseph A. Wolf\footnote{ Corresponding author.
        Address: Department of Mathematics, University of California, Berkeley,
        CA 94720--3840; e--mail: {\tt jawolf@math.berkeley.edu}.
        Research partially supported by a Simons Foundation grant and by the
        Dickson Emeriti Professorship at the University of California,
        Berkeley.}}
\date{March 9, 2015}
\begin{document}
\maketitle

\begin{abstract}
A compact Riemannian homogeneous space $G/H$, with a bi--invariant orthogonal decomposition $\mathfrak{g}=\mathfrak{h}+\mathfrak{m}$ 
is called positively curved for commuting pairs, if the sectional
curvature vanishes for any tangent plane in $T_{eH}(G/H)$ spanned by
a linearly independent commuting pair in $\mathfrak{m}$. In this paper,
we will prove that on the coset space $\mathrm{Sp}(2)/\mathrm{U}(1)$, in which $\mathrm{U}(1)$ corresponds to a short root, admits positively curved
metrics for commuting pairs. B. Wilking recently proved that 
this $\mathrm{Sp}(2)/\mathrm{U}(1)$ can not be positively curved in the general sense. This is the first example to distinguish the set of compact coset spaces admitting positively curved metrics, and that for metrics
positively curved only for commuting pairs. 
\end{abstract}

\section{Introduction}\label{sec1}
\setcounter{equation}{0}
Let $G/H$ be a compact Riemannian homogeneous space with $G$ compact. 
With respect to any bi--invariant inner product 
$\langle\cdot,\cdot\rangle_{\mathrm{bi}}$ on $\mathfrak{g}$,
there is an invariant orthogonal decomposition $\mathfrak{g}=\mathfrak{h}+
\mathfrak{m}$ of the Lie algebra of $G$, and as usual $\mathfrak{m}$ is 
identified with the tangent space $T_{eH}(G/H)$.
\smallskip

We call the Riemannian homogeneous space $G/H$ {\it positively curved for commuting pairs}, if for any linearly independent commuting pair $X$ and $Y$
in $\mathfrak{m}$, the sectional curvature of the tangent plane
$\mathrm{span}\{X,Y\}\subset T_{eH}(G/H)$ is positive. This notion 
contrasts with the traditional
algebraic method for the classification of positively curved Riemannian 
homogeneous spaces (\cite{AloffWallach1975}, \cite{BB}, \cite{Ber61}, 
\cite{Wallach1972}).
In those papers, the method for showing that a compact homogeneous space 
$G/H$ fails to have strictly positive sectional curvature, is to show that 
the sectional curvature vanishes for some commuting pair.  
It was generally accepted
that compact coset spaces admitting homogeneous metrics positively curved for 
commuting pairs are exactly the homogeneous Riemannian 
manifolds of strictly positive sectional curvature.
\smallskip

While trying to generalize these classifications to
the Finsler situation (\cite{XD}, \cite{XDHH}), we found a problem in 
L. B{\'e}rard--Bergery's classification \cite{BB} of odd 
dimensional positively curved Riemannian homogeneous spaces. There is a 
gap in the argument that the coset space 
$\mathrm{Sp}(2)/\mathrm{U}(1)$ (where $\mathrm{U}(1)$ corresponds to
a short root) cannot be positively curved.  After a stratified classification
of Cartan subalgebras contained in $\mathfrak{m}$ for this 
$\mathrm{Sp}(2)/\mathrm{U}(1)$, we saw that the traditional algebraic method
mentioned above cannot be used to exclude $\mathrm{Sp}(2)/\mathrm{U}(1)$
from the list of positively curved homogeneous spaces. Formally, we have
the following main theorem.

\begin{theorem} \label{main} Consider the compact homogeneous space
$G/H=\mathrm{Sp}(2)/\mathrm{U}(1)$ in which $H$ corresponds to a short root,
with the orthogonal decomposition $\mathfrak{g}=\mathfrak{h}+\mathfrak{m}$ for
a bi--invariant inner product. Then there are $G$--homogeneous Riemannian
metrics on it which are positively curved for commuting pairs, 
i.e. at $o=eH\in G/H$, the sectional curvature
$K(o,X\wedge Y)>0$ for any
linearly independent commuting pair $X$ and $Y$ in $\mathfrak{m}=T_o M$.
\end{theorem}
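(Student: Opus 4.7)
The plan is to parametrize the space of $G$-invariant metrics on $\mathrm{Sp}(2)/\mathrm{U}(1)$, stratify the commuting pairs in $\mathfrak{m}$ by the Cartan subalgebras they span, and then exhibit explicit metric parameters for which the sectional curvature is strictly positive on every such pair.

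First I would fix a Cartan subalgebra $\mathfrak{t}_0\supset\mathfrak{h}$ of $\mathfrak{g}=\mathfrak{sp}(2)$ and use the root space decomposition of type $C_2$ to split $\mathfrak{m}$ into $\mathrm{Ad}(H)$-irreducible pieces. Since $\mathfrak{h}$ is spanned by the coroot of a short root $\alpha$, the trivial isotype $\mathfrak{m}_0$ is three-dimensional (the orthogonal complement of $\mathfrak{h}$ in $\mathfrak{t}_0$ together with the real root plane attached to the short root orthogonal to $\alpha$), while the three real $2$-planes coming from $\pm\alpha$, $\pm 2e_1$, $\pm 2e_2$ give three pairwise isomorphic non-trivial isotypes $\mathfrak{m}_1,\mathfrak{m}_2,\mathfrak{m}_3$. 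A general $\mathrm{Ad}(H)$-invariant inner product is then prescribed by an inner product on $\mathfrak{m}_0$ together with a positive definite Hermitian $3\times 3$ form coupling $\mathfrak{m}_1\oplus\mathfrak{m}_2\oplus\mathfrak{m}_3$, giving ample freedom to deform.

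Next I would classify the commuting pairs up to $H$-action. Because $\mathrm{rank}(\mathfrak{g})=2$, any linearly independent commuting pair $(X,Y)\subset\mathfrak{m}$ spans a full Cartan subalgebra $\mathfrak{t}=\mathrm{Ad}(g)\mathfrak{t}_0$ contained in $\mathfrak{m}$, i.e.\ with $\mathrm{Ad}(g)\mathfrak{h}\perp\mathfrak{h}$. Parametrizing such $g$ modulo $N_G(\mathfrak{t}_0)$ on the right and $H$ on the left should produce a stratified finite-dimensional family, typically a generic open stratum plus lower-dimensional degenerate strata; for each stratum I need an explicit normal form and a bracket table for $\mathfrak{g}$ in an adapted basis.

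For the curvature I would exploit the fact that $[X,Y]_\mathfrak{g}=0$ makes all the bracket terms vanish in the homogeneous-space curvature formula, collapsing it to the pure symmetric Koszul-tensor contribution
\[
\langle R(X,Y)Y,X\rangle = |U(X,Y)|^2 - \langle U(X,X),U(Y,Y)\rangle,
\]
where $U$ is determined by $2\langle U(X,Y),Z\rangle = \langle [Z,X]_\mathfrak{m},Y\rangle + \langle [Z,Y]_\mathfrak{m},X\rangle$. On each stratum this becomes a polynomial form in the coordinates of $X,Y$ whose coefficients are rational in the metric parameters, and the theorem reduces to choosing parameters that make it positive on every nonzero pair in every stratum simultaneously. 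This simultaneous positivity is the main obstacle: for a normal (bi-invariant) metric one has $U\equiv 0$, so $K\equiv 0$ on commuting pairs, and the proof must begin from this degenerate case and perturb. I would introduce one-parameter deformations that rescale one $\mathfrak{m}_i$ at a time, compute the linear-order gain in $\langle R(X,Y)Y,X\rangle$ on each stratum, and identify an open region of parameter space in which all strata become strictly positive; producing and verifying such a region, by explicit symbolic or numerical computation using the normal forms from step two, is the heart of the argument.
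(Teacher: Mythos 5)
Your outline reproduces the paper's skeleton (parametrize $\mathrm{Ad}(H)$-invariant metrics, stratify the commuting pairs up to $\mathrm{Ad}(H)$, perturb from the normal metric, invoke compactness of $\mathcal{C}$), and your observation that $[X,Y]=0$ collapses the curvature to $|U(X,Y)|^2-\langle U(X,X),U(Y,Y)\rangle$ is correct. But there is a real gap at the heart of the perturbation step: you propose to ``compute the linear-order gain in $\langle R(X,Y)Y,X\rangle$''. There is none. Writing $M_t=I+tL$, one has $U(\cdot,\cdot,0)\equiv 0$, so $U(X,Y,t)=\tfrac{t}{2}([X,LY]+[Y,LX])+O(t^2)$, and since $C(X,Y,t)$ is quadratic in $U$ one gets $C(X,Y,t)=O(t^2)$ identically. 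The first nonvanishing order is the second, where the paper computes
$\tfrac{d^2}{dt^2}C(X,Y,t)|_{t=0}=\tfrac12\,\|[X,LY]-[Y,LX]\|_{\mathrm{bi}}^2\geq 0$,
and whether you win at that order depends on whether $[X,LY]=[Y,LX]$ can happen on a Cartan in $\mathfrak{m}$.

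This leads to the two further problems. First, ``rescale one $\mathfrak{m}_i$ at a time'' restricts $L$ to the block-diagonal Hermitian matrices on $\mathfrak{m}_1\oplus\mathfrak{m}_2\oplus\mathfrak{m}_3$ with $B=0$; with any such $L$ the equation $[X,LY]=[Y,LX]$ has large solution sets (e.g.\ any Cartan contained in a single eigenspace of $L$), so the second-order gain degenerates on a positive-dimensional family of strata and the compactness argument cannot close. The crux of the paper -- and exactly what B\'erard-Bergery overlooked -- is that one must include the skew-adjoint coupling $B$ (the ``phase'' part of the Hermitian form) in $L$, and even then a judicious choice of $A,B,C$ still leaves the second derivative vanishing on exactly one $\mathrm{Ad}(H)$-orbit, $\mathfrak{t}_0=\mathrm{span}\{(0,0,e_1,0),(0,0,0,e_2)\}$, where one must push to third order and verify $\tfrac{d^3}{dt^3}C|_{t=0}>0$. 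Without recognizing that the linear term is zero, that a nontrivial $B$ is indispensable, and that a third-order analysis is needed on a residual orbit, the program as written cannot be carried through.
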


After we announced this result, B. Wilking found a way to prove 
that $\mathrm{Sp}(2)/\mathrm{U}(1)$ does not admit homogeneous Riemannian metrics of positive curvature (see Theorem \ref{thm-2} in Section \ref{sec5}).
At the same time as the problem in \cite{BB} was fixed, 
Theorem \ref{thm-2}, together with the main theorem, provides us
the first example of compact homogeneous space that is positively curved 
for commuting pairs but not positively curved in the general sense. As 
the traditional algebraic method works well in most other cases, 
non--positively curved Riemannian homogeneous spaces which are positively 
curved for commuting pairs may be very rare.  We thank Burkhard Wilking
and Wolfgang Ziller for several e--mail discussions that led us to this
refinement of our original note.

\section{The Basic Setup for $\mathrm{Sp}(2)/\mathrm{U}(1)$}\label{sec2}
\setcounter{equation}{0}
Let $M$ be the coset space $G/H=\mathrm{Sp}(2)/\mathrm{U}(1)$, in which
$H$ corresponds to a short root. We borrow the following construction from
\cite{BB} with some minor changes. Any matrix
$$\frac{1}{2}\left(
\begin{array}{cc}
u+w & v-\lambda \\
v+\lambda & u-w \\
\end{array}
\right)
$$
in $\mathfrak{g}=\mathrm{Lie}(G)=\mathfrak{sp}(2)$ can be identified with a
formal row vector $(\lambda, u,v,w)$,
in which the pure imaginary quaternions $u$, $v$, and $w$ are viewed as
column vectors in $\mathbb{R}^3$ with the more preferred
dot and cross products with respect to the standard orthonormal basis
$\{e_1,e_2,e_3\}$, instead of quaternion multiplication.
For the bi--invariant inner product of $\mathfrak{g}$, the different factors
of $\lambda$, $u$, $v$ and $w$ are orthogonal to each other, and the
restriction of the bi--invariant inner product to each factor of $u$, $v$ or
$w$ coincides with the standard inner product up to scalar changes.
The subalgebra $\mathfrak{h}=\mathrm{Lie}(H)=\mathfrak{u}(1)$ can be
identified with the subspace $u=v=w=0$, i.e. the $\lambda$--factor, and its
bi--invariant orthogonal complement $\mathfrak{m}$ can be identified with the
subspace $\lambda=0$. For any two vectors $X=(0,u,v,w)$
and $Y=(0,u',v',w')$ in $\mathfrak{m}$, their bracket can be presented as
$$[X,Y]=(v\cdot w'-v'\cdot w, u\times u'+v\times v'+ w\times w',
u\times v'-u'\times v, u\times w'-u'\times w).$$

Any $G$--homogeneous metric on $M$ can be defined from an
$\mathrm{Ad}(H)$--invariant inner product on $\mathfrak{m}$.
Our presentation of $\mathfrak{m}$ naturally splits, with
the $u$--factor corresponding to the trivial $H$--representation, and the
other two factors each corresponding to the same non--trivial irreducible
$H$--representation, i.e. for $Z=(1,0,0,0)\in\mathfrak{h}$,
$$\mathrm{Ad}(\exp(tZ))(0,0,v,w)=(0,0,\cos(2t)v+\sin(2t)w,-\sin(2t)v+\cos(2t)w).$$
So any $\mathrm{Ad}(H)$--invariant inner product
$\langle\cdot,\cdot\rangle$ on $\mathfrak{m}$ must be of the form
$\langle \cdot,\cdot\rangle=\langle \cdot,M\cdot\rangle_{\mathrm{bi}}$,
in which the linear isomorphism $M:\mathfrak{m}\rightarrow\mathfrak{m}$
satisfies,
$$M(0,u,v,w)=(0,Au,Cv-Bw,Bv+Cw),$$
where $A$ and $C$ are self adjoint, $B$ is skew adjoint, $A>0$ and
$C-\sqrt{-1}B>0$ (or equivalently $\left(
                                     \begin{smallmatrix}
                                       C & -B \\
                                       B & C \\
                                     \end{smallmatrix}
                                   \right) >0$).
To see this, we use $\mathrm{Ad}(H)$--invariance and the fact that
$\mathrm{Ad}(H)$ is trivial on the $u$--factor and rotates between the
$v$-- and $w$--factors.  So $M(0,u,v,w)$ has form
$(0,Au,B_1 v+B_2 w, B_3 v+B_4 w)$.  Since the resulting
inner product on $\mathfrak{m}$ is $\mathrm{Ad}(H)$--invariant, the
$6\times6$ matrix
$\left(
          \begin{smallmatrix}
            B_1 & B_2 \\
            B_3 & B_4 \\
          \end{smallmatrix}
\right)$
commutes with all rotations
$\left(
                                \begin{smallmatrix}
                                  \cos t I & \sin t I \\
                                  -\sin t & \cos t I \\
                                \end{smallmatrix}
\right)$.
It follows that $B_1=B_4$ and $B_2=-B_3$.  As $M$ is self adjoint and
positive definite, $M(0,u,v,w)=(0,Au,Cv-Bw,Bv+Cw)$ with $A > 0$ self adjoint,
$B$ skew adjoint, and $C$ self adjoint. Thus the action of $M$ on the
$v, w$ $6$--plane is given by
$\left(
                                     \begin{smallmatrix}
                                       C & -B \\
                                       B & C \\
                                     \end{smallmatrix}
\right)>0$.
\smallskip

In B\' erard--Bergery's argument, he missed the $B$--term. In later discussion,
we only consider small perturbations of the
$G$--normal Riemannian homogeneous metric which corresponds to
$M=M_0=\mathrm{Id}$, so we denote $M_t=I+tL$ for $t\geq 0$, in which
$L:\mathfrak{m}\rightarrow\mathfrak{m}$ is defined by
$L(0,u,v,w)=(0,Au,Cv-Bw,Bv+Cw)$ with $A$ and $C$ self adjoint, and $B$
skew adjoint. For $t$ sufficiently close to 0, the corresponding
$G$--homogeneous metric is denoted as $g_t$.

\section{Proof of the Main Theorem}\label{sec3}
\setcounter{equation}{0}
With respect to the standard basis $\{e_1,e_2,e_3\}$ of $\mathbb{R}^3$,
we have linear maps $A$, $B$ and $C$ defined by the matrices
$$A=\left(
      \begin{smallmatrix}
        0 & 1 & 0 \\
        1 & 0 & 0 \\
        0 & 0 & 0 \\
      \end{smallmatrix}
    \right),
B=\left(
  \begin{smallmatrix}
  0  & 1 & 0 \\
  -1 & 0 & 0 \\
  0  & 0 & 0 \\
  \end{smallmatrix}
\right),
\mbox{ and }
C=\left(
    \begin{smallmatrix}
      1 & 0 & 1 \\
      0 & 0 & 1 \\
      1 & 1 & 0 \\
    \end{smallmatrix}
  \right).
$$
Let $L(0,u,v,w)=(0,Au,Cv-Bw,Bv+Cw)$, $M_t = I + tL$, and
$g_t$ the corresponding $G$--invariant Riemannian metric on $M$
for $t>0$ sufficiently close to 0.
\smallskip

The sectional curvature $K^{g_t}(o,X\wedge Y)$ of $(M,g_t)$ for the tangent
plane $\mathfrak{t}=\mathrm{span}\{X,Y\}$ at $o=eH$ is
$K^{g_t}(o,X\wedge Y)=C(X,Y,t)/S(X,Y,t)$ where
$$
S(X,Y,t)=g_t(X,X)g_t(Y,Y)-g_t(X,Y)^2$$ and
\begin{eqnarray*}
C(X,Y,t)&=&-\tfrac{3}{4}\langle[X,Y]_\mathfrak{m},[X,Y]_\mathfrak{m}\rangle_{g_t}
+\tfrac{1}{2}\langle[[Y,X]_\mathfrak{m},Y]_\mathfrak{m},X\rangle_{g_t}+
\tfrac{1}{2}\langle[[X,Y]_\mathfrak{m},X]_\mathfrak{m},Y\rangle_{g_t}\\
&+&
\langle[[X,Y]_\mathfrak{h},X],Y\rangle_{g_t}
+\langle U(X,Y,t),U(X,Y,t)\rangle_{g_t}-\langle U(X,X,t),U(Y,Y,t)\rangle_{g_t\,.}
\end{eqnarray*}
Here $U:\mathfrak{m}\times\mathfrak{m}\times [0,\epsilon)\rightarrow\mathfrak{m}$
is defined by
$$\langle U(X,Y,t),Z\rangle_{g_t}=\tfrac{1}{2}(\langle[Z,X]_\mathfrak{m},Y\rangle_{g_t}+
\langle[Z,Y]_{\mathfrak{m}},X\rangle_{g_t}),$$ or equivalently (see the last
section of \cite{BB})
$$U(X,Y,t)=\tfrac{1}{2} M_t^{-1}([X,M_t Y]+[Y,M_t X]).$$
When $[X,Y]=0$ and $t=0$,
$K^{g_0}(o,X\wedge Y)=C(X,Y,0)=0$ by the sectional curvature formula for normal homogeneous spaces \cite{Ber61}, and
$\tfrac{d}{dt}C(X,Y,t)|_{t=0}=0$ because $U(X,Y,0)=0$. Thus
$\frac{d^2}{dt^2}K^{g_t}(o,X\wedge Y)|_{t=0}$ has the same sign (or 0)
as $\frac{d^2}{dt^2}C(X,Y,t)|_{t=0}$. Furthermore, when they vanish,
$\frac{d^3}{dt^3}K^{g_t}(o,X\wedge Y)|_{t=0}$ has the same sign (or 0)
as $\frac{d^3}{dt^3}C(X,Y,t)|_{t=0}$.
Direct calculation shows, when $[X,Y]=0$,
\begin{eqnarray*}
\tfrac{d^2}{dt^2}\langle U(X,Y,t),U(X,Y,t)\rangle_{g_t}|_{t=0}
=\tfrac{1}{2}
\langle[X,LY]+[Y,LX],[X,LY]+[Y,LX]\rangle_\mathrm{bi},
\end{eqnarray*}
and
\begin{eqnarray*}
\tfrac{d^2}{dt^2}\langle U(X,X,t),U(Y,Y,t)\rangle_{g_t}|_{t=0}
&=&2
\langle [X,LX],[Y,LY]\rangle_{\mathrm{bi}}=2
\langle [[X,LX],Y],LY\rangle_\mathrm{bi}\\
&=&2\langle [X,[LX,Y]],LY \rangle_{\mathrm{bi}}
=2\langle [X,LY],[Y,LX]\rangle_\mathrm{bi},
\end{eqnarray*}
thus
\begin{eqnarray}
\tfrac{d^2}{dt^2}C(X,Y,t)|_{t=0}&=&
\tfrac{1}{2}\langle[X,LY]-[Y,LX],[X,LY]-[Y,LX]
\rangle_{\mathrm{bi}}.
\end{eqnarray}
Notice that $\frac{1}{S(X,Y)^{1/2}}([X,LY]-[Y,LX])$ depends only on the tangent
plane $\mathrm{span}\{X,Y\}$.  Thus we have

\begin{lemma}\label{lemma-0}
If $X, Y \in \mathfrak{m}$ are linearly independent and commute, then
$C(X,Y,0)=\frac{d}{dt}C(X,Y,t)|_{t=0}=0$,
and
$\frac{d^2}{dt^2}C(X,Y,t)|_{t=0}\geqq 0$, with equality if and only if
$[X,LY]=[Y,LX]$. Equivalently, for any Cartan subalgebra
$\mathfrak{t}\subset\mathfrak{m}$, we have
\begin{equation}\label{1100}
K^{g_0}(o,\mathfrak{t})= \tfrac{d}{dt}K^{g_t}(o,\mathfrak{t})|_{t=0}=0,
\end{equation}
and
\begin{equation}\label{1101}
\tfrac{d^2}{dt^2}K^{g_t}(o,\mathfrak{t})|_{t=0}\geq 0
\end{equation}
with equality if an only if $[X,LY]=[Y,LX]$ in where
$\mathfrak{t}=\mathrm{span}\{X,Y\}$.
\end{lemma}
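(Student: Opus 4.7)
The plan is to assemble the pieces already gathered in the computation preceding the lemma; the lemma is essentially a bookkeeping statement that repackages what has been shown, so I expect no substantial obstacle. First I would note that $C(X,Y,0) = 0$ and $\frac{d}{dt}C(X,Y,t)|_{t=0} = 0$ are both established directly in the text: the former from B\'erard--Bergery's sectional curvature formula for normal homogeneous metrics together with $[X,Y]=0$, and the latter from the identity $U(X,Y,0) = \frac{1}{2}([X,Y]+[Y,X]) = 0$, which kills the $t$--linear contribution to every term of $C(X,Y,t)$ in which $U$ appears.

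For the second derivative, equation (3.1) has already been derived, yielding
\[
\tfrac{d^2}{dt^2}C(X,Y,t)|_{t=0}
= \tfrac{1}{2}\langle [X,LY]-[Y,LX],\, [X,LY]-[Y,LX]\rangle_{\mathrm{bi}}.
\]
Since $\langle\cdot,\cdot\rangle_{\mathrm{bi}}$ is positive definite on $\mathfrak{g}$, this quantity is nonnegative, and vanishes if and only if $[X,LY]=[Y,LX]$. This gives the first pair of assertions in the lemma.

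To pass from $C$ to $K$, I would use $K^{g_t}(o,X\wedge Y) = C(X,Y,t)/S(X,Y,t)$. Since $X$ and $Y$ are linearly independent and $g_0$ is a genuine inner product, $S(X,Y,0) > 0$, so $S(X,Y,t)$ is smooth and nonvanishing near $t=0$. Combined with the fact that $C(X,Y,t)$ vanishes to second order at $t=0$, this forces
\[
K^{g_0}(o,X\wedge Y) = \tfrac{d}{dt}K^{g_t}(o,X\wedge Y)|_{t=0} = 0,
\]
and $\tfrac{d^2}{dt^2}K^{g_t}(o,X\wedge Y)|_{t=0} = \tfrac{d^2}{dt^2}C(X,Y,t)|_{t=0}\,/\,S(X,Y,0)$ so the sign (or vanishing) of the two second derivatives coincide.

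Finally, for the Cartan subalgebra reformulation, elements of $\mathfrak{t}$ commute by definition, so the commuting hypothesis $[X,Y]=0$ holds for any basis of $\mathfrak{t}$. The remark recorded just above the lemma, that $S(X,Y,0)^{-1/2}([X,LY]-[Y,LX])$ depends only on the tangent plane $\mathrm{span}\{X,Y\}$, confirms that both the numerical value $\tfrac{d^2}{dt^2}K^{g_t}(o,\mathfrak{t})|_{t=0}$ and the equality condition $[X,LY]=[Y,LX]$ are well-defined intrinsic attributes of the plane $\mathfrak{t}$, independent of the chosen basis. With this observation the equivalent reformulation in terms of $\mathfrak{t}$ is immediate.
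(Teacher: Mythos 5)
Your proposal is correct and follows essentially the same route as the paper: the lemma is stated immediately after the computation that proves it, and you have simply reassembled those pieces (vanishing of $C$ and $C'$ at $t=0$, the explicit formula for $C''(0)$, the positive-definiteness of $\langle\cdot,\cdot\rangle_{\mathrm{bi}}$, the positivity of $S$, and the plane-dependence remark) into a clean argument.
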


To distinguish between the situations in which
$\frac{d^2}{dt^2}C(X,Y,t)|_{t=0}$
is positive or 0, we will prove the following lemma, which is crucial for
the proof of the Theorem \ref{main}.

\begin{lemma}\label{lemma-1} Let $X, Y \in \mathfrak{m}$ linearly independent
and $\mathfrak{t} = \mathrm{span}\{X, Y\}$.  Suppose that $[X,Y] = 0$, so
$\mathfrak{t}$ is a Cartan subalgebra of $\mathfrak{g}$.
Let $\mathfrak{t}_0 = \mathrm{span}\{(0,0,e_1,0), (0,0,0,e_2)\}$.
If $\mathfrak{t} \notin \mathrm{Ad}(H)(\mathfrak{t}_0)$
then
\begin{equation}\label{1200}
\frac{d^2}{dt^2}C(X,Y,t)\Bigr |_{t=0}>0, \mbox{ or equivalently }
\frac{d^2}{dt^2}K^{g_t}(o,\mathfrak{t})\Bigr |_{t=0}>0.
\end{equation}
If $\mathfrak{t} \in \mathrm{Ad}(H)(\mathfrak{t}_0)$
then
\begin{equation*}
\frac{d^2}{dt^2}C(X,Y,t)\Bigr |_{t=0}=0 \mbox{ and } \frac{d^3}{dt^3}C(X,Y,t)\Bigr |_{t=0}>0,
\end{equation*}
or equivalently,
\begin{equation}\label{1201}
\frac{d^2}{dt^2}K^{g_t}(o,\mathfrak{t})\Bigr |_{t=0}=0 \mbox{ and } \frac{d^3}{dt^3}K^{g_t}(o,\mathfrak{t})\Bigr |_{t=0}>0
\end{equation}
\end{lemma}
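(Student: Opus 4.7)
The plan is to exploit Lemma~\ref{lemma-0}, which tells us that $\tfrac{d^2}{dt^2}C(X,Y,t)|_{t=0}$ is nonnegative and vanishes precisely when $[X,LY]=[Y,LX]$. Because the specific $L$ built from the matrices $A$, $B$, $C$ of Section~\ref{sec3} is $\mathrm{Ad}(H)$--equivariant (it has exactly the form forced by $\mathrm{Ad}(H)$--invariance in Section~\ref{sec2}), both $K^{g_t}(o,\mathfrak{t})$ and its $t$--derivatives depend only on the $\mathrm{Ad}(H)$--orbit of $\mathfrak{t}$. So I would first classify the Cartan subalgebras $\mathfrak{t}\subset\mathfrak{m}$ up to the $\mathrm{Ad}(H)$--action, and then on each orbit decide whether the condition $[X,LY]=[Y,LX]$ holds.

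For the classification, take $X=(0,u,v,w)$ and $Y=(0,u',v',w')$ with $[X,Y]=0$. The four equations obtained from the bracket formula, namely $v\cdot w'=v'\cdot w$, $u\times u'+v\times v'+w\times w'=0$, $u\times v'=u'\times v$, and $u\times w'=u'\times w$, are stringent. I would stratify pairs $(X,Y)$ according to the rank of $\{u,u'\}$ and of $\{v,v',w,w'\}$ in $\mathbb{R}^3$, use the rotation action of $\mathrm{Ad}(H)$ on the $(v,w)$--factors (together with the $\mathrm{GL}_2$ freedom of changing basis within $\mathfrak{t}$) to normalize representatives, and read off a finite list of orbit types. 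The orbit of $\mathfrak{t}_0=\mathrm{span}\{(0,0,e_1,0),(0,0,0,e_2)\}$ is characterized by $u=u'=0$ together with $v,v',w,w'$ all lying in a coordinate $2$--plane of $\mathbb{R}^3$ determined by the basis $\{e_1,e_2\}$.

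With a normal form on each stratum, the identity $[X,LY]=[Y,LX]$ becomes an explicit system of vector equations in $\mathbb{R}^3$ in which the particular $A$, $B$, $C$ enter. I would substitute and verify stratum by stratum that the identity fails off the $\mathrm{Ad}(H)$--orbit of $\mathfrak{t}_0$, while on that orbit it holds (the direct check for $(X_0,Y_0)$ gives $[X_0,LY_0]=[Y_0,LX_0]=0$, since $Be_2\cdot 0=0$ and $Ce_2=e_3$, $Ce_1=e_1+e_3$ produce only terms lying in the trivial $H$--representation). This case--by--case bookkeeping is the main obstacle of the proof: the precise shape of the matrices $A$, $B$, $C$ has been rigged exactly so that $[X,LY]=[Y,LX]$ cuts out nothing larger than $\mathrm{Ad}(H)(\mathfrak{t}_0)$, and one must use those concrete choices nontrivially in every stratum.

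Finally, on the orbit $\mathrm{Ad}(H)(\mathfrak{t}_0)$, $\mathrm{Ad}(H)$--invariance reduces the third--derivative claim to a computation at the single pair $(X_0,Y_0)$. Expanding $M_t^{-1}=I-tL+t^2 L^2-\cdots$ inside $U(X,Y,t)=\tfrac{1}{2}M_t^{-1}([X,M_tY]+[Y,M_tX])$ and expanding $\langle\cdot,\cdot\rangle_{g_t}=\langle\cdot,M_t\cdot\rangle_{\mathrm{bi}}$ elsewhere in $C(X,Y,t)$, I would collect the $t^3$--coefficient. Since $U(X_0,Y_0,0)=0$ and $\tfrac{d^2}{dt^2}C(X_0,Y_0,t)|_{t=0}=0$, most contributions cancel, leaving an explicit combination of $\langle[X_0,LY_0\!-\!LX_0],\cdots\rangle_{\mathrm{bi}}$ and $\langle[X_0,L^2Y_0\!\pm\![Y_0,L^2X_0]],\cdots\rangle_{\mathrm{bi}}$ type expressions. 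Substituting the given $A$, $B$, $C$ evaluates this to a positive rational number, completing the proof.
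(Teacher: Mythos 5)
Your high-level plan mirrors the paper's: stratify the Cartan subalgebras $\mathfrak{t}\subset\mathfrak{m}$ by the rank of $\{u,u'\}$ (the paper's Cases I, II, III with $\mathrm{rank}\{u,u'\}=0,1,2$), bring each to a normal form via $\mathrm{Ad}(H)$ and basis changes within $\mathfrak{t}$, test the vanishing condition $[X,LY]=[Y,LX]$ from Lemma~\ref{lemma-0}, and compute $C(X,Y,t)$ in closed form on the exceptional orbit. That is exactly the paper's route.

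There is, however, a concrete error and a substantial gap. The error is your characterization of $\mathrm{Ad}(H)(\mathfrak{t}_0)$ as all Cartan subalgebras with $u=u'=0$ and $v,w,v',w'\in\mathrm{span}\{e_1,e_2\}$: this set is strictly larger than the orbit. For instance $\mathrm{span}\{(0,0,e_1+e_2,0),(0,0,0,e_1-e_2)\}$ has $u=u'=0$, all components in $\mathrm{span}\{e_1,e_2\}$, and $[X,Y]=0$, yet one checks directly that it is not an $\mathrm{Ad}(H)$-translate of $\mathfrak{t}_0$ (the $\mathrm{Ad}(H)$-action on $\mathfrak{t}_0$ produces only a circle, whereas your set is a positive-dimensional family in the directions of $v$ and $w'$). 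The correct statement, obtained in the paper's Case I, needs two separate inputs: the $B$-block forces $\mathrm{span}\{v,w'\}=\mathrm{span}\{e_1,e_2\}$, and only then does the $C$-block (being non-diagonal on $e_1,e_2$) force $\{\pm v,\pm w'\}=\{\pm e_1,\pm e_2\}$. Your description skips the second input entirely, which is exactly what distinguishes $\mathfrak{t}_0$ from the counterexample above.

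The gap is that the stratum-by-stratum verification, which you yourself flag as ``the main obstacle,'' is never carried out. The validity of the lemma hinges on the specific entries of $A$, $B$, $C$: one must verify that $e_3$ is not a common eigenvector of $B$ and $A-C$ (Case II), that $B$ is invertible on $\mathrm{span}\{e_1,e_2\}$ and $C$ not diagonal there (Case I), and in Case III run the reduction carefully through the $\mu_1=\mu_2$ and $\mu_1\neq\mu_2$ sub-cases, each producing a different system of vector equations. Until that is done there is a plan, not a proof. Similarly, for the third derivative you propose to ``collect the $t^3$-coefficient,'' but what actually makes the computation tractable is the observation that at $(X_0,Y_0)$ one has $[X_0,M_tY_0]=[Y_0,M_tX_0]=0$ for all $t$ together with $[X_0,Y_0]=0$, so every term of $C$ except $-\langle U(X_0,X_0,t),U(Y_0,Y_0,t)\rangle_{g_t}$ vanishes identically, and that remaining term evaluates to a rational function $ct^3/(1-t^2)$ whose sign is read off directly; a brute-force Taylor expansion of $M_t^{-1}$ through every term of $C$ would be far less transparent and is not actually performed.
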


The proof of Lemma \ref{lemma-1} will be postponed to the next section.
We now prove Theorem \ref{main}, assuming Lemma \ref{lemma-1}.
\smallskip

Denote the set of all Cartan subalgebras of $\mathfrak{g}$ contained in
$\mathfrak{m}$ as $\mathcal{C}$, and the set of all tangent planes at $o=eH$
as $\mathcal{G}$. Then $\mathcal{G}$ is a Grassmannian manifold,
$\mathcal{C}$ is a compact subvariety. The isotropy subgroup $H$ has natural
$\mathrm{Ad}(H)$--actions on $\mathcal{G}$ which preserve $\mathcal{C}$. It is
easy to see, for any valid $t$,
the sectional curvature function $K^{g_t}(o,\cdot)$ is $\mathrm{Ad}(H)$-invariant.
\smallskip

If $\mathfrak{t}\in\mathcal{C}$ is a Cartan subalgebra contained in $\mathfrak{m}$, such that its $\mathrm{Ad}(H)$-orbit does not contain $\mathfrak{t}_0=\mathrm{span}\{(0,0,e_1,0),
(0,0,0,e_2)\}$, then by (\ref{1200}) in Lemma \ref{lemma-1},
we can find an open neighborhood $\mathcal{U}$ of $\mathfrak{t}$ in $\mathcal{C}$,
and a positive $\epsilon$ (sufficiently close to 0, same below), such that for any Cartan subalgebra $\mathfrak{t}'\in\mathcal{U}$ and $t\in(-\epsilon,\epsilon)$,
$\frac{d^2}{dt^2}K^{g_t}(o,\mathfrak{t}')>0$. Together with (\ref{1100}) in Lemma \ref{lemma-0}, it indicates
for any Cartan subalgebra $\mathfrak{t}'\in\mathcal{U}$ and $t\in(0,\epsilon)$, $K^{g_t}(o,\mathfrak{t}')>0$.\smallskip

If $\mathfrak{t}\in\mathcal{C}$ is a Cartan subalgebra contained in $\mathfrak{m}$, such that its $\mathrm{Ad}(H)$-orbit contains $\mathfrak{t}_0$, then by (\ref{1201}), we can find an open neighborhood $\mathcal{U}$ of $\mathfrak{t}$ in $\mathcal{C}$,
and a positive $\epsilon$, such that for any Cartan subalgebra $\mathfrak{t}'\in\mathcal{U}$ and $t\in(-\epsilon,\epsilon)$,
$\frac{d^3}{dt^3}K^{g_t}(o,\mathfrak{t}')>0$.
Together with (\ref{1100}) and (\ref{1101}) in Lemma \ref{lemma-0}, it indicates
for any Cartan subalgebra $\mathfrak{t}'\in\mathcal{U}$ and $t\in(0,\epsilon)$, $K^{g_t}(o,\mathfrak{t}')>0$.\smallskip

By the compactness of $\mathcal{C}$, we can find a finite cover for it from
the open neighborhoods $\mathcal{U}$ given above, and take a uniform minimum
$\epsilon>0$. Then for any Cartan subalgebra $\mathfrak{t}\in\mathcal{C}$
contained in $\mathfrak{m}$ and $t\in(0,\epsilon)$, $K^{g_t}(o,\mathfrak{t})>0$.
This completes the proof of Theorem \ref{main}.

\section{Proof of Lemma \ref{lemma-1}}\label{sec4}
\setcounter{equation}{0}
The proof of Lemma \ref{lemma-1} is an analysis of the Cartan subalgebras
of $\mathfrak{g}$ contained in $\mathfrak{m}$.  Observe that $\mathcal{C}$
is the union of the following $\mathrm{Ad}(H)$--invariant subsets.
\begin{description}
\item{\bf Case I}. The Cartan subalgebra $\mathfrak{t}$ is spanned by
$X=(0,0,v,w)$ and $Y=(0,0,v',w')$ in $\mathfrak{m}$, it belongs to $\mathcal{C}_1$.
\item{\bf Case II}. The tangent plane $\mathfrak{t}$ is spanned by
$X=(0,u,v,w)$ and $Y=(0,0,v',w')$ in $\mathfrak{m}$, in which $u\neq 0$, it belongs to
$\mathcal{C}_2$.
\item{\bf Case III}. The tangent plane $\mathfrak{t}$ is spanned by
$X=(0,u,v,w)$ and $Y=(0,u',v',w')$ in $\mathfrak{m}$, in which $u$ and $u'$ are linearly independent, it belongs to
$\mathcal{C}_3$.
\end{description}
The two techniques we will use are change of basis in a given $\mathfrak{t}$,
and change of $\mathfrak{t}$ in $\mathcal{C}$ by the action of $H$, to
reduce our discussion to several cases with very simple $X$ and $Y$.
\smallskip

{\em Proof of Lemma \ref{lemma-1} in Case I}.
Assume that $X=(0,0,v,w)$ and $Y=(0,0,v',w')$
span the Cartan subalgebra $\mathfrak{t}$.
\smallskip

First, consider the situation where $v$ and $w$ are linearly dependent.
Changing basis of
$\mathfrak{t}$ by a suitable $\mathrm{Ad}(H)$--action, we can assume $w=0$.
Subtracting a multiple of $X$ from $Y$ we can assume $v'\cdot v=0$. Since
$[X,Y]=0$ we have $v\times v'=0$. Thus $v'=0$. Also from $[X,Y]=0$,
we have $v\cdot w'=0$. Both $v$ and $w'$ can be normalized to have length 1.
\smallskip

Next, consider the situation that $v$ and $w$ are linearly independent.
Because $[X,Y]=0$, we have
\begin{eqnarray}
v\times v'&=&-w\times w', \mbox{ and }\label{0098}\\
v\cdot w'&=&v'\cdot w. \label{0099}
\end{eqnarray}
From (\ref{0098}), $v'$ and $w'$ are contained in $\mathrm{span}\{v,w\}$.
By a suitable $\mathrm{Ad}(H)$--action, we may assume $v\cdot w=0$.
Replacing $Y$ with a suitable linear combination of $X$ and $Y$, we can
assume $v'\cdot v=0$ as well. If $v'=0$, it goes back to the last situation,
otherwise we can normalize $v$ and $v'$ and assume $|v|=|v'|=1$.
Express $w=b_2 v'$ and $w'=c_1 v+c_2 v'$, with $b_2\neq 0$.
By (\ref{0098}) and (\ref{0099}), $b_2=c_1=\pm 1$. We can further change $Y$
to $\pm Y$ and assume $b_2=c_1=1$. Then
$$X''=Y+\tfrac{1}{2}(-c_2\pm\sqrt{c_2^2+4} X)=(0,0,v'',w'')$$
where $v''$ and $w''$ are linearly independent. Replacing $X$ with
$X''$, we reduce to the last situation.
\smallskip

To summarize, for $\mathfrak{t}\subset\mathcal{C}_1$, we can find a
representative $\mathrm{span}\{(0,0,v,0),(0,0,0,w')\}$
in the $\mathrm{Ad}(H)$--orbit of
$\mathfrak{t}$, for which $|v|=|w'|=1$ and $v\cdot w'=0$.
\smallskip

Now we may suppose  $\mathfrak{t}$ is spanned by $X=(0,0,v,0)$ and
$Y=(0,0,0,w')$ with $|v|=|w'|=1$ and $v\cdot w'=0$.
If $\frac{d^2}{dt^2}C(X,Y,t)|_{t=0}=0$, i.e. $[X,LY]=[Y,LX]$, then
\begin{eqnarray}
w'\cdot Cv&=&0 \mbox{, and }\label{0101}\\
v\times Bw'&=&-w'\times Bv.\label{0100}
\end{eqnarray}
From (\ref{0100}), $B$ preserves the subspace spanned by $v$ and $w'$,
or equivalently
$\mathrm{span}\{v,w'\}^\perp$ is an eigenspace of $B$, which must be
$\mathbb{R}e_3$. So $\mathrm{span}\{v,w'\}=\mathrm{span}\{e_1,e_2\}$. Because of (\ref{0101}), and the speciality of the chosen $C$, we must have
$\{\pm v,\pm w'\}=\{\pm e_1,\pm e_2\}$, i.e., up to the action of
$\mathrm{Ad}(H)$,
$$\mathfrak{t}=\mathrm{span}\{(0,0,e_1,0),(0,0,0,e_2)\}.$$

To summarize, we have
$\frac{d^2}{dt^2}C(X,Y,t)|_{t=0}>0$
when $\mathfrak{t}\in\mathcal{C}_1$ is not contained in the
$\mathrm{Ad}(H)$--orbit of
$\mathrm{span}\{(0,0,e_1,0)$, $(0,0,0,e_2)\}$, and
$\frac{d^2}{dt^2}C(X,Y,t)|_{t=0}=0$, when $\mathfrak{t}\in\mathcal{C}_1$\,.
\smallskip

Further consider $\frac{d^3}{dt^3}C(X,Y,t)|_{t=0}$,
we only need to assume
$X=(0,0,e_1,0)$ and $Y=(0,0,0,e_2)$.
By direct calculation $[X,LY]=[X,M_t Y]=[Y,LX]=[Y,M_t X]=0$, and so
\begin{eqnarray*}
U(X,Y,t)&=& 0,\\
U(X,X,t)&=&
\bigl ( 0,\tfrac{t^2}{1-t^2}e_1+\tfrac{-t}{1-t^2}e_2,0,0 \bigr ),
\end{eqnarray*}
and $[Y, M_t Y]=(0,te_1,0,0)$.
So
\begin{eqnarray*}
C(X,Y,t)=-\langle U(X,X),U(Y,Y)\rangle_{g_t}=
        -\langle U(X,X),[Y,M_t Y]\rangle_{\mathrm{bi}}=\frac{ct^3}{1-t^2},
\end{eqnarray*}
where the constant $c>0$ comes  from the scalar relation between the standard
inner product on $\mathbb{R}^3$ and the restriction of the bi--invariant inner
product of $\mathfrak{g}$ to the $u$--factor.
Now it is obvious that
$\frac{d^3}{dt^3}C(X,Y,t)|_{t=0}>0$.
\smallskip

{\em Proof of Lemma \ref{lemma-1} in Case II}.
Assume that the Cartan subalgebra $\mathfrak{t}\in\mathcal{C}_1$ is spanned by
$X=(0,u,v,w)$ and $Y=(0,0,v',w')$ with $u\neq 0$. We normalize $u$ so that
that $|u|=1$.  Because $[X,Y]=0$, we have
$u\times v'=u\times w'=0$, i.e. $v',w'\in\mathbb{R}u$.
We can apply an element of $\mathrm{Ad}(H)$ and then scale, so that
$w'=0$ and $v'=u$. Using $[X,Y]=0$ again, we have $v\times v'=0$ and
$v'\cdot w=u\cdot w=0$. Subtract a suitable multiple of $Y$ from $X$;
we then have $v\cdot v'=0$, which implies $v=0$.
\smallskip

To summarize, the $\mathrm{Ad}(H)$--orbit of
$\mathfrak{t}\in\mathcal{C}_2$ contains a Cartan that is spanned by
$X=(0,u,0,w)$ and $Y=(0,0,u,0)$ with $|u|=1$ and $u\cdot w=0$.
\smallskip

If further we have $[X,LY]=[Y,LX]$, then direct calculation shows
\begin{eqnarray}
u\cdot Cw&=&0,\\
w\times Bu+u\times Bw&=& 0,\\
u\times (C-A)u &=& 0,\label{0102}\\
u\times Bu&=& 0.\label{0103}
\end{eqnarray}
From (\ref{0102}) and (\ref{0103}), the unit vector
$u$ is a common eigenvector of $B$, i.e. $u=\pm e_3$, and $u$ is also an eigenvector of $A-C$. But $e_3$ is not a eigenvector of $A-C$.
So in this case we always have $\frac{d^2}{dt^2}K^{g_t}(o,\mathfrak{t})|_{t=0}>0$.
\smallskip

{\em The proof of Lemma \ref{lemma-1} in Case III}.
Let $\mathfrak{t}\in\mathcal{C}_3$ be spanned by $X=(0,u,v,w)$ and
$Y=(0,u',v',w')$ with $u$ and $u'$ linearly independent.
\smallskip

We had observed that $v$, $w$, $v'$ and $w'$ are all contained in the subspace
spanned by $u$ and $u'$. By $[X,Y]=0$, we have $u\times v'=u'\times v$, from
which we see that $v$ and $v'$ are linear combinations of $u$ and $u'$.
Similarly $w$ and $w'$  are linear combinations of $u$ and $u'$.
\smallskip

Next, consider the situation where $v$ and $w$ are linearly dependent. They
cannot both vanish because the $u$--factor of $[X,Y]$ does not vanish.
Acting by a suitable $\mathrm{Ad}(H)$, we can make $w=0$. Subtracting a
suitable multiple of $X$ from $Y$, we have $v\cdot v'=0$.  Then
we can find linear combination $Y''=(0,u'',v'',w'')$ of $X$ and $Y$ to
substitute for $Y$, so that $v''$ and $w''$ are also linearly independent
and they cannot both vanish. Using a suitable generic $\mathrm{Ad}(H)$
transformation, we reduce to the situation where $\mathfrak{t}$ has basis
$X=(0,u,v,\mu_1 v)$ and $Y=(0,u',v',\mu_2 v')$ with the properties (i) $u$
and $u'$ are linearly independent, (ii) $v$ and $v'$ are nonzero vectors in
the span of $u$ and $u'$, and (using $[X,Y]=0$) $v$ and $v'$ form another basis
of $\mathrm{span}\{u,u'\}$.
\smallskip

Next we go to the general $X=(0,u,v,w)$ and $Y=(0,u',v',w')$ and reduce to
the situation above.  We may assume that $v$ and $w$ are linearly independent,
for otherwise the reduction is immediate.  Applying
$\mathrm{Ad}(H)$ we can suppose $u\cdot v=0$.
Subtracting a suitable multiple of $X$ from $Y$, we also have $u\cdot u'=0$.
With suitable scalar changes for $X$ and $Y$, we normalize $u$ and $u'$
so that $|u|=|u'|=1$.  Denote
\begin{eqnarray*}
v=b_2 u', v'=b'_1 u+b'_2 v,
w=c_1 u+c_2 u', \text{ and } w'=c'_1 u+c'_2 v.
\end{eqnarray*}
Then $[X,Y]=0$ forces
\begin{eqnarray}
b'_2&=&0,\mbox{ and}\\
b_2 c'_2 &=& b'_1 c_1. \label{1004}
\end{eqnarray}
Note that $X''=X+\lambda Y=(0,u'',v'',w'')$ has linearly dependent entries
$v''$ and $w''$ if and only if
$$\det\left(
        \begin{array}{cc}
          b'_1\lambda & b_2 \\
          c'_1\lambda+c_1 & c'_2\lambda+c_2 \\
        \end{array}
      \right)=b'_1 c'_2\lambda^2+(b'_1 c_2-c'_1 b_2)\lambda -b_2 c_1=0.$$
By (\ref{1004}), the above equation must have a real solution.
Substituting the corresponding $X''$ for $X$, we reduce the discussion to
the case $X=(0,u,v,\mu_1 v)$ and $Y=(0,u',v',\mu_2 v)$, and there
$\mathrm{span}\{u,u'\}=
\mathrm{span}\{v,v'\}$ is a two dimension subspace in $\mathbb{R}^3$.
\smallskip

If $\mu_1=\mu_2$, we can apply a suitable element of $\mathrm{Ad}(H)$ to make
them vanish. By similar tricks, we can make $u\cdot u'=0$ and $|u|=|u'|=1$.
There is a real number $\lambda$, such that
$X''=X+\lambda Y=(0,u'',v'',0)=(0,u+\lambda u',v+\lambda v',0)$ satisfies
$$
u''\cdot v''=(u'\cdot v')\lambda^2+(u\cdot v'+v\cdot u')\lambda+u\cdot v=0,
$$
because we can get $u\cdot v+u'\cdot v'=0$ from $[X,Y]=0$.
Replace $X$ with $X''$; then $u\cdot v=0$. Subtract a suitable multiple
of $X$ from $Y$; then $u\cdot u'=0$ again, i.e. $v$ is a scalar multiple
of $u'$. Also, normalize $u$ and $u'$ so that $|u|=|u'|=1$.
Express $v=\nu_1 u'$ and $v'=\nu_2 u+\nu_3 u'$. From $[X,Y]=0$, we get
$\nu_1\nu_2=1$ and $\nu_3=0$.
If we only require $u\cdot u'=0$ then by suitable scalar changes for $Y$,
we can make $\nu_1=\nu_2=1$.

In this case, we have $X=(0,u,\nu_1 u',0)$ and $Y=(0,u',\nu_2 u,0)$,
 in which $|u|=|u'|=1$, $u\cdot u'=0$ and $\nu_1\nu_2=1$.
There is another way to present $\mathfrak{t}=\mathrm{span}\{X,Y\}$
in which $\nu_1$ and $\nu_2$ do not appear.  Replace $Y$ by $\nu_1 Y$ and
$u'$ by $\nu_1 u'$. Then we have
$X=(0,u,u',0)$ and $Y=(0,u',u,0)$ in where $u\cdot u'=0$.

If further we have $[X,LY]=[Y,LX]$, then
\begin{eqnarray}
u\cdot Bu'&=&0,\label{1005}\\
u\times (A-C)u'&=&u'\times (A-C)u\\
u'\times (A-C)u'&=&u\times (A-C)u,\\
u\times Bu &=&  u'\times Bu'\label{0200}.
\end{eqnarray}
By (\ref{0200}), $B$ preserves $\mathrm{span}\{u,u'\}$, so
$\mathrm{span}\{u,u'\} = \mathrm{span}\{e_1,e_2\}$. Then $u\cdot Bu'\neq 0$,
contradicting (\ref{1005}). So in this case,
$\frac{d^2}{dt^2}C(X,Y,t)|_{t=0}>0$ when $X$ and $Y$ span $\mathfrak{t}$.
\smallskip

If $\mu_1\neq \mu_2$ then, because $[X,Y]=0$, $u\times v'=u'\times v$
and $\lambda_2 u\times v'=\lambda_1 u'\times v$, thus $u\times v'=u'\times v=0$.
Applying a suitable element of $\mathrm{Ad}(H)$ we have
$X=(0,u,\nu_1 u',0)$ and $Y=(0,u',\nu_2 u,\nu_3 u)$, with $\nu_1\nu_2=1$.
By similar argument, we may assume $\nu_1=\nu_2=1$.
\smallskip

If further $[X,LY]=[Y,LX]$ then
\begin{eqnarray}
u\times((C-A)u-\nu_3 Bu)&=& u'\times (C-A)u',\\
u\times(\nu_3(C-A)u+ Bu)&=&  u'\times Bu'.
\end{eqnarray}
It follows that $Bu$, $Bu'$, $(C-A)u$ and $(C-A)u'$ belong to
$\mathrm{span}\{u,u'\}$. So
$(\mathrm{span}\{u,u'\})^\perp$ consists of the common eigenvectors of $B$
and $C-A$.  Thus $(\mathrm{span}\{u,u'\})^\perp = \mathbb{R}e_3$.  But $e_3$
is not an eigenvector of
$C-A$. This is a contradiction. That completes the proof of Lemma
\ref{lemma-1}. \hfill $\diamondsuit$
\smallskip

As a by-product of the above argument we have the following explicit
description for Cartan subalgebras contained in $\mathfrak{m}$, for the
space $\mathrm{Sp}(2)/\mathrm{U}(1)$. It may be useful for further
study of curvature on that space.

\begin{proposition} Let $M=G/H = \mathrm{Sp}(2)/\mathrm{U}(1)$ in where
$\mathrm{U}(1)$ corresponds to a short root, and let
$\mathfrak{g}=\mathfrak{h}+\mathfrak{m}$ be the corresponding orthogonal
decomposition. Then the set $\mathcal{C}$ of all
Cartan subalgebras of $\mathfrak{g}$ contained in $\mathfrak{m}$ is the union
of four $\mathrm{Ad}(H)$-orbits with the following representatives:
\begin{description}
\item{\rm (1)} $\mathrm{span}\{X,Y\}$, with $X=(0,0,v,0)$ and
$Y=(0,0,0,w')$ such that $|v|=|w'|=1$ and $v\cdot w'=0$.
\item{\rm (2)} $\mathrm{span}\{X,Y\}$, with $X=(0,u,0,w)$ and
$Y=(0,0,u,0)$ such that $|u|=1$ and $u\cdot w=0$.
\item{\rm (3)} $\mathrm{span}\{X,Y\}$, with $X=(0,u,u',0)$ and $Y=(0,u',u,0)$ such that $u$ and $u'$ are linearly independent
    and $u\cdot u'=0$.
\item{\rm (4)} $\mathrm{span}\{X,Y\}$, with $X=(0,u,u',0)$ and $Y=(0,u',u,\mu u)$ such that $u$ and $u'$ are linearly independent and $\mu\neq 0$.
\end{description}
\end{proposition}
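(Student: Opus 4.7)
The plan is to read the classification off the case analysis already carried out in Section \ref{sec4}. Observe first that the projection $\pi_u:\mathfrak{m}\to\mathfrak{m}_u$ sending $(0,u,v,w)$ to $u$ is $\mathrm{Ad}(H)$-equivariant (in fact $\mathrm{Ad}(H)$ acts trivially on the $u$-factor), so $\dim\pi_u(\mathfrak{t})\in\{0,1,2\}$ is an $\mathrm{Ad}(H)$-invariant attached to any $\mathfrak{t}\in\mathcal{C}$. The three values correspond exactly to the subsets $\mathcal{C}_1$, $\mathcal{C}_2$, and $\mathcal{C}_3$ introduced at the start of Section \ref{sec4}, which already separates (1), (2), and (3)$\cup$(4) into different orbits.

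Next, I would invoke the reductions from the proof of Lemma \ref{lemma-1} directly. Case I there shows that every $\mathfrak{t}\in\mathcal{C}_1$ has, after an $\mathrm{Ad}(H)$-rotation and a change of basis inside $\mathfrak{t}$, a basis of the form $X=(0,0,v,0)$, $Y=(0,0,0,w')$ with $|v|=|w'|=1$ and $v\cdot w'=0$, which is representative (1). Case II reduces every $\mathfrak{t}\in\mathcal{C}_2$ to $X=(0,u,0,w)$, $Y=(0,0,u,0)$ with $|u|=1$ and $u\cdot w=0$, i.e.\ representative (2). Case III brings every $\mathfrak{t}\in\mathcal{C}_3$ to a basis $X=(0,u,v,\mu_1 v)$, $Y=(0,u',v',\mu_2 v')$ with $\{v,v'\}$ a basis of $\mathrm{span}\{u,u'\}$; the subcases $\mu_1=\mu_2$ and $\mu_1\neq\mu_2$ receive the additional simplifications already worked out in Section \ref{sec4} and produce exactly representatives (3) and (4). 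No new calculation is needed at this stage; the proof simply reassembles these reductions.

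It remains only to separate (3) from (4). I would identify the $(v,w)$-factor of $\mathfrak{m}$ with $\mathbb{R}^3\otimes\mathbb{C}\cong\mathbb{C}^3$ via $(v,w)\mapsto v+iw$; then the formula $\mathrm{Ad}(\exp tZ)(0,0,v,w)=(0,0,\cos(2t)v+\sin(2t)w,-\sin(2t)v+\cos(2t)w)$ becomes multiplication by $e^{-2it}$ on $\mathbb{C}^3$, so the property that the projection $\pi_{vw}(\mathfrak{t})\subset\mathbb{C}^3$ can be $\mathrm{Ad}(H)$-rotated to lie inside the real part $\mathbb{R}^3$ is by construction $\mathrm{Ad}(H)$-invariant. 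For representative (3), $\pi_{vw}(\mathfrak{t})=\mathrm{span}_{\mathbb{R}}\{u,u'\}\subset\mathbb{R}^3$ already lies in the real part. For representative (4), $\pi_{vw}(\mathfrak{t})=\mathrm{span}_{\mathbb{R}}\{u',(1+\mu i)u\}$ with $\mu\neq 0$; a one-line check shows that if $e^{i\alpha}u'$ and $e^{i\alpha}(1+\mu i)u$ are both real, then $e^{i\alpha}=\pm 1$ forces $\mu=0$. Hence (3) and (4) are distinct orbits. I expect the main technical point to be precisely formulating this last invariant cleanly; once it is in hand, everything else is bookkeeping from Section \ref{sec4}.
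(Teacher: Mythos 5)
Your proposal is correct and takes essentially the same route as the paper, which presents the proposition as a ``by-product'' of the Case I/II/III reductions in the proof of Lemma \ref{lemma-1}. Your two added observations --- using $\dim\pi_u(\mathfrak{t})\in\{0,1,2\}$ to separate $\mathcal{C}_1,\mathcal{C}_2,\mathcal{C}_3$, and the complexification $(v,w)\mapsto v+iw$ on which $\mathrm{Ad}(\exp tZ)$ acts by $e^{-2it}$ to separate types (3) and (4) --- are correct and usefully make explicit the orbit-disjointness that the paper leaves implicit.
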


\section{This $\mathrm{Sp}(2)/\mathrm{U}(1)$ Cannot Be Positively Curved}
\label{sec5}
\setcounter{equation}{0}
With his permission we present the following unpublished theorem 
of B. Wilking.  This theorem came out of discussions of an early
version of this note.

\begin{theorem}\label{thm-2}
The compact homogeneous space
$G/H=\mathrm{Sp}(2)/\mathrm{U}(1)$, in which $H$ corresponds to a short root,
does not admit a homogeneous Riemannian metric with all sectional curvatures
positive.
\end{theorem}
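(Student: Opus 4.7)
The plan is to suppose, for contradiction, that $g$ is a $G$-invariant Riemannian metric on $M = \mathrm{Sp}(2)/\mathrm{U}(1)_{\mathrm{short}}$ with strictly positive sectional curvature. By the main theorem of this paper, any $2$-plane on which the sectional curvature is non-positive must be spanned by a \emph{non-commuting} pair; the task is therefore to locate such a plane. My approach is to combine constraints coming from totally geodesic submanifolds of $(M,g)$ with a direct curvature computation on non-commuting $2$-planes.

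The key structural observation is that for every $a \in H = \mathrm{U}(1)_{\mathrm{short}}$ outside the central $\{\pm I\}$, the left translation $L_a$ is a non-trivial isometry of $(M,g)$ whose fixed point set through $eH$ is a $3$-dimensional totally geodesic submanifold $N$. Concretely, $N$ is the orbit at $eH$ of the diagonal subgroup $\mathrm{Sp}(1)_d \subset G$ (whose Lie algebra coincides with the $u$-factor $\mathfrak{m}_0 \subset \mathfrak{m}$), and it is diffeomorphic to $\mathrm{Sp}(1)_d/\{\pm I\} \cong \mathrm{SO}(3)$. The metric that $g$ induces on $N$ is the left-invariant metric on $\mathrm{SO}(3)$ determined by the positive operator $A$ on $\mathfrak{so}(3) \cong \mathfrak{m}_0$. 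By Milnor's criterion for left-invariant metrics on compact simple three-dimensional Lie groups, positive sectional curvature on $N$ forces a triangle-inequality-type constraint on the eigenvalues of $A$; and the finite Weyl-type action of $N_G(H)/Z_G(H)$ on the space of admissible $(A,B,C)$ lets me reduce $A$ further to a canonical form.

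The next step is to combine this constraint with those coming from positive sectional curvature on the four $\mathrm{Ad}(H)$-orbit types of Cartan subalgebras in $\mathfrak{m}$ classified in Section 4, and then to test explicit non-commuting $2$-planes, in particular planes mixing $\mathfrak{m}_0$ with the equivalent non-trivial summands $\mathfrak{m}_1 \oplus \mathfrak{m}_2$ (the $v$- and $w$-factors). The main obstacle will be this final step: non-commuting $2$-planes do not form a finite list of orbit types as Cartan subalgebras do, and the sectional curvature formula from Section 3 does not simplify in the way that $[X,Y]=0$ allowed. I expect the resolution to exploit the equivalence $\mathfrak{m}_1 \cong \mathfrak{m}_2$ of the two non-trivial isotropy summands, mediated by the skew-adjoint operator $B$ in $L$ --- the novel feature overlooked in B\'erard--Bergery's original argument --- so that a carefully chosen non-commuting pair exhibits a sign reversal in curvature for every $(A,B,C)$ compatible with the constraints derived above.
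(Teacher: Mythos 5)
Your plan is a genuinely different strategy from what the paper does, but as written it has two real problems, one of which is fatal.

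The first is a logical misstep in the opening. You infer from Theorem~\ref{main} that any non--positively curved $2$--plane must be spanned by a \emph{non}--commuting pair. But Theorem~\ref{main} only establishes ``positively curved for commuting pairs'' for the specific family $g_t$ built from the particular $L$ in Section~\ref{sec3}; it says nothing about an arbitrary $G$--invariant metric $g$. For a general $g$ the obstructing plane could perfectly well be commuting, and indeed the plane that the paper ultimately exhibits, $\mathrm{span}\{X, M^{-1}Z\}$ with $[X,Z]=0$, degenerates to a commuting plane as $M\to\mathrm{Id}$. So the deduction that you must ``locate a non-commuting plane'' is unjustified --- though it happens to point in a usable direction.

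The second and decisive problem is that the heart of the argument is left as an expectation. After extracting constraints on $A$ from the totally geodesic $N\cong \mathrm{SO}(3)$ via Milnor's criterion and on $(A,B,C)$ from the four Cartan orbit types of Section~\ref{sec4}, you still need to produce, \emph{for every} admissible triple $(A,B,C)$ satisfying those constraints, a concrete $2$--plane with non--positive curvature. You acknowledge this is ``the main obstacle'' and only say you ``expect the resolution to exploit'' the $\mathfrak{m}_1\cong\mathfrak{m}_2$ equivalence mediated by $B$. That is precisely the content of the theorem, and the proposal supplies no mechanism to carry it out; moreover, the admissible $(A,B,C)$ form a high--dimensional family, so without a uniform argument there is no reason to expect a finite case check to close it. The paper (Lemma~\ref{wilking-method}) does supply such a uniform mechanism: take an eigenvector $X$ of $M$ for its \emph{smallest} eigenvalue $\lambda$, choose any $Z$ with $[X,Z]=0$, set $Y=M^{-1}Z$, and exploit $U(X,X)=0$, $U(X,Y)=-\tfrac12\lambda M^{-1}[X,Y]$, together with the spectral estimates $\langle X',MX'\rangle_{\mathrm{bi}}\geq\lambda\langle X',X'\rangle_{\mathrm{bi}}$ and $\langle X',M^{-1}X'\rangle_{\mathrm{bi}}\leq\lambda^{-1}\langle X',X'\rangle_{\mathrm{bi}}$ to force $C(X,Y)\leq0$. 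No reduction of $(A,B,C)$ to canonical form, no use of Milnor's criterion, and no enumeration of non--commuting planes is needed; the only $\mathrm{Sp}(2)/\mathrm{U}(1)$--specific input is that one can always find such a commuting $Z$ for the smallest--eigenvalue eigenvector $X$, which is checked by two easy cases depending on whether the smallest eigenvalue comes from $A$ or from $\bigl(\begin{smallmatrix}C & -B\\ B & C\end{smallmatrix}\bigr)$. Your totally geodesic $\mathrm{SO}(3)$ observation is a nice structural fact, but it does not substitute for this missing step.
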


Let $\mathfrak{g}=\mathfrak{h}+\mathfrak{m}$ be the bi--invariant orthogonal
decomposition.
Any homogeneous Riemannian metric on $G/H$ is one-to-one determined by
an $\mathrm{Ad}(H)$-invariant inner product $\langle\cdot,\cdot\rangle=\langle\cdot,M\cdot\rangle_{\mathrm{bi}}$
in which the self adjoint isomorphism $M:\mathrm{m}\rightarrow\mathrm{m}$,
with respect to $\langle\cdot,\cdot\rangle_{\mathrm{bi}}$, is
$\mathrm{Ad}(H)$-invariant and  positive definite.
\smallskip

The analytic technique in B. Wilking's proof can be summarized as the following lemma.

\begin{lemma}\label{wilking-method}
Let $G$ be a compact connected Lie group, $H$ a closed subgroup 
of $G$, and $\mathfrak{g}=\mathfrak{h}+\mathfrak{m}$ a bi-invariant orthogonal 
decomposition.  Suppose that for any $\mathrm{Ad}(H)$--equivariant
linear map $M:\mathfrak{m}\rightarrow\mathfrak{m}$, positive definite
with respect to the restriction of the bi-invariant inner product 
to $\mathfrak{m}$, there is an eigenvector $X\in\mathfrak{m}$ for the 
smallest eigenvalue of $M$, and another $Z\in\mathfrak{m}$, such that 
$\{X,Z\}$ is a linearly independent commuting pair.  Then
$G/H$ does not admit $G$-homogeneous Riemannian metrics of strictly
positive sectional curvature.
\end{lemma}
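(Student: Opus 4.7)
The strategy is proof by contradiction. Assume $G/H$ admits a $G$-invariant Riemannian metric $g$ of strictly positive sectional curvature, corresponding to an $\mathrm{Ad}(H)$-equivariant positive-definite self-adjoint operator $M:\mathfrak{m}\to\mathfrak{m}$. Apply the hypothesis to this $M$ to obtain an eigenvector $X\in\mathfrak{m}$ for the smallest eigenvalue $\lambda$ of $M$, together with $Z\in\mathfrak{m}$ linearly independent of $X$ satisfying $[X,Z]=0$. The goal is to show $K^{g}(X\wedge Z)=0$, contradicting strict positivity.

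First I would specialize the sectional curvature formula from Section \ref{sec3} to the 2-plane $\mathrm{span}\{X,Z\}$. Since $[X,Z]=0$ in $\mathfrak{g}$, the four ``bracket'' contributions to $C(X,Z)$ all vanish, so the numerator of $K^{g}(X\wedge Z)$ reduces to $\langle U(X,Z),U(X,Z)\rangle_{g}-\langle U(X,X),U(Z,Z)\rangle_{g}$. Now invoke $MX=\lambda X$: this forces $[X,MX]=\lambda[X,X]=0$, so $U(X,X)=0$, and likewise $[Z,MX]_{\mathfrak{m}}=\lambda[Z,X]_{\mathfrak{m}}=0$ reduces $U(X,Z)$ to $\tfrac{1}{2}M^{-1}[X,MZ]_{\mathfrak{m}}$. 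Therefore $K^{g}(X\wedge Z)\geq 0$, with equality precisely when $[X,MZ]_{\mathfrak{m}}=0$.

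The decisive step is then to upgrade $Z$ to an eigenvector of $M$ that still commutes with $X$ and is still linearly independent of $X$. If such a replacement $Z'$ with $MZ'=\mu Z'$ can be found, then $[X,MZ']_{\mathfrak{m}}=\mu[X,Z']_{\mathfrak{m}}=0$ is automatic, hence $U(X,Z')=0$ and $K^{g}(X\wedge Z')=0$ delivers the contradiction. The plan for producing $Z'$ is to apply the hypothesis not only to $M$ but to a one-parameter family $M_{s}=M+sL$ of admissible operators with a carefully chosen $\mathrm{Ad}(H)$-equivariant self-adjoint perturbation $L$; the commuting pair furnished by the hypothesis varies (subsequentially) continuously in $s$, and a limiting/compactness argument, together with the $\mathrm{Ad}(H)$-action on the smallest eigenspace of $M$, should force a commuting partner of $X$ to lie inside a single eigenspace of $M$.

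The main obstacle lies in this last construction. The centralizer $\{Y\in\mathfrak{m}:[X,Y]=0\}$ of $X$ in $\mathfrak{m}$ is in general \emph{not} $M$-invariant, so one cannot intersect it with an eigenspace by direct linear algebra on the fixed operator $M$. Bridging this gap requires the full force of the hypothesis: its validity for every admissible deformation of $M$, not just the given one, is what allows the promotion of the generic commuting partner $Z$ to an eigenvector. It is at this junction that Wilking's insight must enter, coupling the minimality of the smallest eigenvalue of $M$ to the existence of an eigenvector of $M$ commuting with $X$.
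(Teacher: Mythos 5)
Your computation up to the point where you reduce $C(X,Z)$ to $\langle U(X,Z),U(X,Z)\rangle_{g}\geq 0$ is correct, but it points in the \emph{wrong} direction: to contradict positive curvature you need a plane with $K\leq 0$, and the plane $\mathrm{span}\{X,Z\}$ always has $K\geq 0$ here, with equality only in the special case $[X,MZ]=0$. Your proposed repair --- deforming $M$ and taking limits to promote the commuting partner to an eigenvector of $M$ --- is not carried out, and as you yourself observe there is no linear-algebraic reason the centralizer of $X$ should meet an eigenspace of $M$ nontrivially; the hypothesis of the lemma asserts nothing about deformations of $M$ beyond what is already needed, so this step is a genuine gap, not just a missing detail.

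The missing idea is to look at a \emph{different} plane: set $Y=M^{-1}Z$ and work with $\mathrm{span}\{X,Y\}$ instead of $\mathrm{span}\{X,Z\}$. This pair is still linearly independent (since $MX=\lambda X$ and $MY=Z$ are), but now $[X,Y]\neq 0$ in general, so the curvature numerator has nonvanishing bracket terms which supply the needed negativity. Concretely, $[X,MY]+[Y,MX]=[X,Z]-\lambda[X,Y]=-\lambda[X,Y]$, which lies in $\mathfrak{m}$ by the general equivariance fact cited from B\'erard--Bergery, so $[X,Y]\in\mathfrak{m}$ and hence $[X,Y]_{\mathfrak{h}}=0$. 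One then gets $U(X,X)=0$, $U(X,Y)=-\tfrac{\lambda}{2}M^{-1}[X,Y]$, and after rewriting everything with $\langle\cdot,\cdot\rangle_{\mathrm{bi}}$,
\begin{equation*}
C(X,Y)=-\tfrac{3}{4}\langle[X,Y],M[X,Y]\rangle_{\mathrm{bi}}+\tfrac{\lambda}{2}\langle[X,Y],[X,Y]\rangle_{\mathrm{bi}}+\tfrac{\lambda^{2}}{4}\langle M^{-1}[X,Y],[X,Y]\rangle_{\mathrm{bi}}.
\end{equation*}
Now the minimality of $\lambda$ gives $\langle W,MW\rangle_{\mathrm{bi}}\geq\lambda\langle W,W\rangle_{\mathrm{bi}}$ and $\langle W,M^{-1}W\rangle_{\mathrm{bi}}\leq\lambda^{-1}\langle W,W\rangle_{\mathrm{bi}}$, and applying these to $W=[X,Y]$ the three coefficients combine to $(-\tfrac{3}{4}+\tfrac{1}{2}+\tfrac{1}{4})\lambda=0$, so $C(X,Y)\leq 0$. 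This single estimate finishes the proof, and it is precisely the ingredient your proposal is missing: the $M^{-1}$ twist turns the minimality of $\lambda$ into an inequality that bounds the positive $U$-term by the negative bracket term, removing any need to modify $Z$ or $M$.
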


\begin{proof}
Any $G$-homogeneous Riemannian metric is determined by an inner product 
$\langle\cdot,\cdot\rangle=\langle\cdot,M\cdot\rangle_{\mathrm{bi}}$ on
$\mathfrak{m}$, where $M$ is a linear map as indicated in the statement of 
the lemma.
We will show the sectional curvature at $eH$ vanishes for the tangent plane
spanned by $X$ and $Y=M^{-1}(Z)$, where $X$ and $Z$ are
indicated by the lemma.
Denote $MX=\lambda X$. Because $\lambda>0$ is the smallest eigenvalue of $M$,
for any $X'\in\mathfrak{m}$, we have
\begin{eqnarray}
\langle X',M(X')\rangle_{\mathrm{bi}}&\geq& \lambda\langle X',X'\rangle_{\mathrm{bi}},\label{2000}\mbox{ and}\\
\langle X',M^{-1}X'\rangle_{\mathrm{bi}}&\leqq&
\lambda^{-1}\langle X',X'\rangle_{\mathrm{bi}}\label{2001}.
\end{eqnarray}
Direct calculation shows
\begin{equation*}
[X,MY]+[Y,MX]=[X,Z]-\lambda[X,Y]=-\lambda[X,Y]
\end{equation*}
is a vector in $\mathfrak{m}$ (see the last section in \cite{BB}),
i.e. $[X,Y]\in\mathfrak{m}$. It is easy to see $X$ and $Y$ are linearly
independent because $MX=\lambda X$ and $Z=MY$ are linearly independent.
\smallskip

Now apply the sectional curvature formula to the tangent plane spanned
by $X$ and $Y$, i.e.
$K(eH,X\wedge Y)=C(X,Y)/S(X,Y)$, in which $S(X,Y)>0$, and
\begin{eqnarray}
C(X,Y)&=&-\frac34\langle[X,Y]_\mathfrak{m},[X,Y]_\mathfrak{m}\rangle
+\frac12\langle[[Y,X]_\mathfrak{m},Y]_\mathfrak{m},X\rangle\nonumber\\
& &+
\frac12\langle[[X,Y]_\mathfrak{m},X]_\mathfrak{m},Y\rangle
+
\langle[[X,Y]_\mathfrak{h},X],Y\rangle\nonumber\\
& &+\langle U(X,Y),U(X,Y)\rangle-\langle U(X,X),U(Y,Y)\rangle,\label{2002}
\end{eqnarray}
where $U:\mathfrak{m}\times\mathfrak{m}\times [0,\epsilon)\rightarrow\mathfrak{m}$
is defined by
$$\langle U(X',Y'),Z'\rangle=\frac12(\langle[Z',X']_\mathfrak{m},Y'\rangle+
\langle[Z',Y']_\mathfrak{m},X'\rangle),$$ or equivalently
$$U(X',Y')=\frac12 M^{-1}([X',M Y']+[Y',M X']),$$
for any $X'$, $Y'$ and $Z'$ in $\mathfrak{m}$.
Because $X$ is an eigenvector of $M$, $U(X,X)=0$ and
$U(X,Y)=-\frac12\lambda M^{-1}([X,Y])$. Because $[X,Y]\in\mathfrak{m}$,
we can simplify (\ref{2002}) and estimate it as follows,
\begin{eqnarray}
C(X,Y)&=&-\frac{3}{4}\langle[X,Y],M([X,Y])\rangle_{\mathrm{bi}}
+\frac{1}{2}\langle[X,Y],[MX,Y]
+[X,MY]\rangle_{\mathrm{bi}}\nonumber\\
& &+\frac{1}{4}\lambda^2\langle M^{-1}([X,Y]),[X,Y]\rangle_{\mathrm{bi}}\nonumber\\
&=&-\frac{3}{4}\langle[X,Y],M([X,Y])\rangle_{\mathrm{bi}}
+\frac{1}{2}\lambda\langle[X,Y],[X,Y]\rangle_{\mathrm{bi}}
\nonumber\\
& &+\frac{1}{4}\lambda^2\langle M^{-1}([X,Y]),[X,Y]\rangle_{\mathrm{bi}}\nonumber\\
&\leqq&-\frac{3}{4}\lambda\langle[X,Y],[X,Y]\rangle_{\mathrm{bi}}
+\frac{1}{2}\lambda\langle[X,Y],[X,Y]\rangle_{\mathrm{bi}}
+\frac{1}{4}\lambda\langle [X,Y],[X,Y]\rangle_{\mathrm{bi}}\nonumber\\
&=&0,
\end{eqnarray}
in which the inequality makes use of (\ref{2000}) and (\ref{2001}).
This shows $K(eH,X\wedge Y)\leqq 0$.  That completes the proof of Lemma 
\ref{wilking-method}.
\end{proof}

Now back to $G/H=\mathrm{Sp}(2)/\mathrm{U}(1)$ in consideration, and we prove
Theorem \ref{thm-2}.
\smallskip

As mentioned earlier, the $\mathrm{Ad}(H)$-invariant linear map $M$ can be
expressed as 
$$M(0,u,v,w)=(0,Au,Cv-Bw,Bv+Cw),$$ 
where $u$, $v$ and $w$ in $\mathbb{R}^3$ are column vectors.  This 
is the standard presentation of vectors in $\mathfrak{m}$. 
Here $A$ and 
$\left( \begin{smallmatrix} C & -B \\ B & C \end{smallmatrix} \right)$
are positive definite matrices. Any eigenvalue of $M$ is either an 
eigenvalue of $A$ or an eigenvalue of 
$\left( \begin{smallmatrix} C & -B \\ B & C \end{smallmatrix} \right)$.
\smallskip

If one eigenvalue of $A$ is the smallest eigenvalue of $M$ we
can find a nonzero eigenvector $u\in\mathbb{R}^3$ accordingly for $A$.
Then $X=(0,u,0,0)$ and $Z=(0,0,u,0)$ satisfy the requirement of the lemma.
\smallskip

If one eigenvalue of 
$\left( \begin{smallmatrix} C & -B \\ B & C \end{smallmatrix} \right)$ 
is the smallest eigenvalue of $M$, let $X=(0,0,v,w)$ denote the corresponding
nonzero eigenvector of $M$. When $v$ and $w$ are linearly
dependent, we choose the nonzero vector $Z=(0,v,0,0)$ or
$Z=(0,w,0,0)$ such that $X$ and $Z$ satisfy the requirement of the lemma.
When $v$ and $w$ are linearly independent, we can find an element $h\in H$,
such that $\mathrm{Ad}(h)X=(0,0,v',w')$ such that $v'$ and $w'$ are nonzero vectors and $v'\cdot w'=0$. Take $Z=\mathrm{Ad}(h^{-1})(0,0,v',0)$, then
$X$ and $Z$ satisfy the requirement of the lemma. This completes the proof of
Theorem \ref{thm-2}.

\end{document}